\newcommand{\R}{{\mathbb  R}}
\def \t{\triangledown}
\numberwithin{equation}{section}
\newtheorem{thm}{\bf Theorem}[section]
\newtheorem{lem}[thm]{\bf Lemma}
\newtheorem{prop}[thm]{\bf Proposition}
\newtheorem{defn}[thm]{\bf Definition}
\newtheorem{corollary}[thm]{Corollary}
\theoremstyle{remark}
\newtheorem{rem}[thm]{\bf Remark}
\begin{document}

\title{\large\bf ASYMPTOTIC STABILITY OF DISSIPATED HAMILTON-POISSON SYSTEMS}
\author{Petre Birtea and Dan Com\u{a}nescu}
\date{ }
\maketitle

\begin{abstract}
We will further develop the study of the dissipation for a
Hamilton-Poisson system introduced in \cite{2}. We will give a
tensorial form of this dissipation and show that it preserves the
Hamiltonian function but not the Poisson geometry of the initial
Hamilton-Poisson system. We will give precise results about
asymptotic stabilizability of the stable equilibria of the initial
Hamilton-Poisson system.
\end{abstract}

{\bf MSC}: 37C10, 37C75, 70E50.

{\bf Keywords}: dynamical systems, stability theory, rigid body.

\section{Introduction}

Let
\begin{equation*}
\dot{x}=\Pi\triangledown H(x)
\end{equation*}
be a Hamilton-Poisson system. If $C\in
C^{\infty}(\mathbb{R}^n,\mathbb{R})$ is a Casimir function then we
have the following obvious equality $\Pi\triangledown C=0$.

We will add to the Hamilton-Poisson system that we will denote by
$\xi_{\Pi}$ a dissipation term of the form $G\triangledown C$,
where $G$ is a certain symmetric matrix that will be discussed
below and $C$ is a Casimir function. The dissipated system $\xi$
will be
\begin{equation}\label{sistem}
\dot{x}=\Pi\triangledown H(x)+G\triangledown C(x)
\end{equation}

The notion of dissipative bracket was introduced by A. Kaufman
\cite{4} in his study of dissipative Hamilton-Poisson systems. In
\cite{7}, P.J. Morrison introduced the notion of metriplectic
systems which are Hamilton-Poisson systems with a dissipation of
metric type. Metric type dissipation was also introduced in \cite{7}
and is given by a dissipative bracket constructed from a metric
defined on the phase space. Dissipative terms and their implications
for dynamics have been studied in connection with various dynamical
systems derived from mathematical physics, see \cite{3}, \cite{grm},
\cite{5},\cite{5ad}, \cite{6}, \cite{7ad}.

The dissipation that we will study is the one introduced in
\cite{2}. Here we will write it in a tensorial form which enables
us to better understand its geometry. It is a particular form of
the metric dissipation introduced in \cite{7}. More precisely, our
system (\ref{sistem}) is the same system with the one described by
equations (25) in \cite{7} under the condition $(g^{ij})\t H=0$.
Like the dissipation from \cite{1}, our dissipation conserves the
energy. A dissipation form that conserves the symplectic leaves
and dissipates the Hamiltonian function have been studied in
\cite{bkmr}.

We will also illustrate the improvement of the stability result
obtained in \cite{2}. This improvement is obtained by the
observation that the dissipation conserves the energy.

\bigskip

\section{$G=\triangledown H\otimes\triangledown H-||\triangledown H||^2\mathbb{I}$}

\bigskip

In this section we introduce the dissipation matrix
$$G=\triangledown H\otimes\triangledown H-||\triangledown
H||^2\mathbb{I}.$$

This is the tensorial form of the dissipation constructed in
\cite{2} where the matrix $\mathbb{I}$ is the identity matrix on
$\mathbb{R}^n$. We denote by $\xi$ the dissipated system
\eqref{sistem} in the case when the matrix $G$ is as above.

\begin{lem}\label{1}We have the following properties for $G$:
\begin{itemize}
   \item [(i)] $G\t H=0$;
   \item [(ii)] $\t C\cdot G\t C\leq 0$, for any Casimir function $C$.
\end{itemize}
The equality holds in a point $x\in \mathbb{R}^n$ iff $\t C(x)$
and $\t H(x)$ are linear dependent.
\end{lem}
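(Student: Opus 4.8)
The plan is to treat both items as direct computations from the explicit form
$G=\t H\otimes\t H-\|\t H\|^2\mathbb{I}$, the only substantive ingredient being the Cauchy--Schwarz inequality in part (ii). The key algebraic fact I would use throughout is that the rank-one tensor $\t H\otimes\t H$ acts on a vector $v$ by $(\t H\otimes\t H)v=(\t H\cdot v)\,\t H$.

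For (i), I would simply apply this with $v=\t H$, which gives $(\t H\otimes\t H)\t H=\|\t H\|^2\,\t H$, so that
\begin{equation*}
G\t H=\|\t H\|^2\,\t H-\|\t H\|^2\,\t H=0 .
\end{equation*}
No hypothesis beyond smoothness of $H$ is needed for this.

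For (ii), I would compute the quadratic form $\t C\cdot G\t C$ in the same fashion. Using $(\t H\otimes\t H)\t C=(\t H\cdot\t C)\,\t H$ one gets
\begin{equation*}
\t C\cdot G\t C=(\t H\cdot\t C)^2-\|\t H\|^2\,\|\t C\|^2 .
\end{equation*}
At this point the Cauchy--Schwarz inequality $(\t H\cdot\t C)^2\le\|\t H\|^2\,\|\t C\|^2$ immediately delivers $\t C\cdot G\t C\le 0$, and the standard equality case of Cauchy--Schwarz shows that equality holds at a point $x$ precisely when $\t H(x)$ and $\t C(x)$ are linearly dependent, which is exactly the claimed characterization.

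I do not anticipate a genuine obstacle: both steps are short and the inequality is elementary, the only mildly delicate move being the recognition that the quadratic form collapses to a Cauchy--Schwarz (Gram-determinant) expression. It is worth flagging, however, that neither part actually uses that $C$ is a Casimir function — the identity in (i) and the sign in (ii) hold for the gradient of \emph{any} smooth function — so the Casimir hypothesis plays no role in the algebra here and is relevant only for how the term $G\t C$ subsequently couples to the Poisson dynamics of $\xi_\Pi$.
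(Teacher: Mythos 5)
Your proof is correct and follows essentially the same route as the paper: both reduce the computation to the identity $(\t H\otimes\t H)v=(\t H\cdot v)\,\t H$ and then invoke the Cauchy--Schwarz inequality together with its equality case for part (ii). Your side remark that the Casimir hypothesis is not actually used in the algebra is accurate and consistent with the paper's argument.
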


\begin{proof}For (i) we have
\begin{eqnarray*}
G\t H&=&(\t H\otimes \t H)\t H-||\t H||^2 \mathbb{I}\t H\\&=&(\t
H\cdot \t H)\t H-||\t H||^2\t H=0
\end{eqnarray*}

Analogous for (ii) we have
\begin{eqnarray*}
\t C\cdot G\t C&=&\t C\cdot [(\t H\otimes \t H)\t C-||\t H||^2
\mathbb{I}\t C]\\&=& \t C\cdot [(\t C\cdot \t H)\t H-||\t H||^2\t
C]\\&=&(\t C\cdot \t H)^2-||\t C||^2\cdot ||\t H||^2\leq0.
\end{eqnarray*}
by C-B-S inequality. Equality holds iff $\t H$ and $\t C$ are
linear dependent.
\end{proof}

For the perturbed system $\xi$ the initial Hamiltonian remains a
conservation low but the Casimir function C does not.

\begin{lem}\label{2}We have the following behavior:
\begin{itemize}
   \item [(i)] $H$ is conserved along the solutions of $\xi$.
   \item [(ii)] $C$ decreases along the solutions of $\xi$.
\end{itemize}
\end{lem}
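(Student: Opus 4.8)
The plan is to differentiate $H$ and $C$ along an arbitrary solution $x(t)$ of $\xi$ and to reduce everything to Lemma \ref{1}, the skew-symmetry of the Poisson matrix $\Pi$, and the defining Casimir identity $\Pi\t C=0$. No substantial machinery is needed beyond these three ingredients together with the chain rule.

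For part (i), I would compute $\frac{d}{dt}H(x(t))=\t H\cdot\dot{x}=\t H\cdot\Pi\t H+\t H\cdot G\t C$. The first term vanishes because $\Pi$ is skew-symmetric, so $\t H\cdot\Pi\t H=-\t H\cdot\Pi\t H=0$. For the second term I would use that $G$ is symmetric in order to move it onto the first factor, writing $\t H\cdot G\t C=(G\t H)\cdot\t C$, which is zero by Lemma \ref{1}(i). Hence $\dot{H}=0$ along the solutions of $\xi$.

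For part (ii), the computation is parallel: $\frac{d}{dt}C(x(t))=\t C\cdot\dot{x}=\t C\cdot\Pi\t H+\t C\cdot G\t C$. The first term again vanishes, this time by combining skew-symmetry with the Casimir property, since $\t C\cdot\Pi\t H=-(\Pi\t C)\cdot\t H=0$. The second term is precisely the quantity controlled by Lemma \ref{1}(ii), so $\t C\cdot G\t C\le 0$. Therefore $\dot{C}\le 0$ and $C$ is non-increasing along the flow.

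There is no genuine obstacle here; the only points worth flagging are bookkeeping ones. First, I would make explicit that the symmetry of $G$ is what legitimizes the transposition step in (i), and that the skew-symmetry of $\Pi$ is used twice (once on $\t H$, once on $\t C$). Second, I would stress that the decrease in (ii) is in the weak sense, and that by the equality case of Lemma \ref{1} the derivative $\dot{C}$ vanishes at a point exactly when $\t C$ and $\t H$ are linearly dependent there. This last remark is the one to highlight, since it is the mechanism that will feed the asymptotic-stability arguments in the sections to follow.
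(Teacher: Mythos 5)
Your proof is correct and follows essentially the same route as the paper: differentiate along the flow, kill the $\Pi$--terms by antisymmetry (and the Casimir identity in (ii)), and control the remaining term by Lemma \ref{1}. The only cosmetic difference is in (i), where the paper expands $G\t C$ explicitly and cancels, while you invoke the symmetry of $G$ together with Lemma \ref{1}(i) --- both are one-line verifications of the same fact.
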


\begin{proof} For (i) we have
\begin{eqnarray*}
\frac{d}{dt}H&=&\dot{x}\cdot \t H=(\Pi\t H+G\t C)\cdot \t H\\&=&
\Pi\t H\cdot \t H+[(\t H\otimes \t H)\t C-||\t H||^2\t C]\cdot \t
H\\&=&-\t H\cdot \Pi\t H=0
\end{eqnarray*}
since $\Pi$ is an antisymmetric matrix.

We do the same type of computation for (ii)
\begin{eqnarray*}
\frac{d}{dt}C&=&\dot{x}\cdot \t C=(\Pi\t H+G\t C)\cdot \t
C\\&=&-\t H\cdot \Pi \t C+\t C\cdot G\t C\\&=&\t C\cdot G\t C
\leq0
\end{eqnarray*}
by Lemma \ref{1} (ii).
\end{proof}

Next, we will study the relation between the set of equilibria
$E_{\xi_{\Pi}}$ for the unperturbed system $\xi_{\Pi}$ and the set
of equilibria $E_{\xi}$ for the perturbed system $\xi$.

\begin{prop}\label{dependent1}We have:
\begin{itemize}
 \item [(i)] If $\t C(x_e)\neq 0$ for a point $x_e\in \mathbb{R}^n$
then $x_e\in E_{\xi}\Leftrightarrow \t H(x_e)\, \texttt{and}\, \t
C(x_e)$ are linear dependent.
 \item [(ii)] $E_{\xi}\subset
E_{\xi_{\Pi}}$.
\end{itemize}
\end{prop}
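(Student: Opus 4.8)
The plan is to exploit the two structural identities already available: the Casimir relation $\Pi\t C=0$ noted in the introduction and the orthogonality property $G\t H=0$ from Lemma \ref{1}(i), combined with the sharp Cauchy--Schwarz estimate of Lemma \ref{1}(ii). For the forward implication of part (i), I would assume $x_e\in E_\xi$, so that $\Pi\t H(x_e)+G\t C(x_e)=0$, and then pair this identity with $\t C(x_e)$. The term $\t C\cdot\Pi\t H$ rewrites, using the antisymmetry of $\Pi$, as $-(\Pi\t C)\cdot\t H$, which vanishes because $C$ is a Casimir. This isolates the scalar relation $\t C(x_e)\cdot G\t C(x_e)=0$, and invoking the equality clause of Lemma \ref{1}(ii) forces $\t H(x_e)$ and $\t C(x_e)$ to be linearly dependent.

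For the reverse implication of (i), I would suppose $\t H(x_e)$ and $\t C(x_e)$ are linearly dependent with $\t C(x_e)\neq0$, so that $\t H(x_e)=\alpha\,\t C(x_e)$ for some scalar $\alpha$. The Casimir relation immediately gives $\Pi\t H(x_e)=\alpha\,\Pi\t C(x_e)=0$, and substituting $\t H=\alpha\,\t C$ into $G\t C=(\t C\cdot\t H)\t H-\|\t H\|^2\t C$ shows the two terms cancel, so $G\t C(x_e)=0$ as well. Hence the full right-hand side of the system vanishes and $x_e\in E_\xi$. The degenerate case $\t H(x_e)=0$ is absorbed into $\alpha=0$ and handled identically, since then $G(x_e)$ is the zero matrix.

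For part (ii), I would argue by cases on whether $\t C$ vanishes at an equilibrium $x\in E_\xi$. If $\t C(x)\neq0$, then part (i) applies and yields linear dependence of $\t H(x)$ and $\t C(x)$; writing $\t H(x)=\alpha\,\t C(x)$ and using the Casimir relation gives $\Pi\t H(x)=0$, i.e. $x\in E_{\xi_\Pi}$. If instead $\t C(x)=0$, then $G\t C(x)=0$ trivially, so the equilibrium equation $\Pi\t H(x)+G\t C(x)=0$ collapses directly to $\Pi\t H(x)=0$, again placing $x$ in $E_{\xi_\Pi}$.

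The computations here are short; the only genuinely delicate point is the forward direction of (i), where the conclusion hinges entirely on the \emph{equality} case of Cauchy--Schwarz in Lemma \ref{1}(ii). I expect the main obstacle to be verifying that the pairing argument truly forces equality, and hence dependence, rather than merely the inequality $\le 0$: this is precisely where the antisymmetry of $\Pi$ and the Casimir property must combine to annihilate the Poisson term exactly, leaving the dissipative term with no slack.
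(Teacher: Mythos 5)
Your proposal is correct and follows essentially the same route as the paper: pairing the equilibrium equation with $\t C(x_e)$, killing the Poisson term via antisymmetry and the Casimir relation, and invoking the equality case of Lemma \ref{1}(ii) for the forward direction, with the same case analysis on $\t C(x_e)$ for part (ii). The only (harmless) variation is in the converse of (i), where you verify $G\t C(x_e)=0$ by direct substitution of $\t H=\alpha\t C$ into the formula for $G$, which avoids the paper's split into the cases $\t H(x_e)=0$ and $\t H(x_e)\neq 0$.
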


\begin{proof} By definition we have that
$$x_e\in E_{\xi_{\Pi}}\Leftrightarrow \Pi\t H(x_e)=0$$
and
$$x_e\in E_{\xi}\Leftrightarrow \Pi\t H(x_e)+G\t C(x_e)=0$$

(i) Let $x_e\in E_{\xi}$. If we multiply both sides of the above
equality with $\t C(x_e)$ then
$$\t C(x_e)\cdot \Pi \t H(x_e)+\t C(x_e)\cdot G\t C(x_e)=0$$
$$\Leftrightarrow -\Pi\t C(x_e)\cdot \t H(x_e)+\t C(x_e)\cdot G\t C(x_e)=0\Leftrightarrow
\t C(x_e)\cdot G\t C(x_e)=0$$ which by Lemma \ref {1} (ii) implies
that $\t H(x_e)$ and $\t C(x_e)$ are linear dependent.

For the converse, if $\t H(x_e)$ and $\t C(x_e)$ are linear
dependent then there exists $\lambda\in \mathbb{R}$ such that $\t
H(x_e)=\lambda\t C(x_e)$. Consequently,
$$\Pi \t H(x_e)=\lambda\Pi \t C(x_e)=0.$$

In the case $\t H(x_e)=0$ we obtain that $G(x_e)=0$.

In the case $\t H(x_e)\neq 0$ we obtain that $\t
C(x_e)=\frac{1}{\lambda}\t H(x_e)$ and using Lemma \ref{1} (i) we
have $G\t C(x_e)=\frac{1}{\lambda}G\t H(x_e)=0$. In both cases we
conclude that $x_e\in E_{\xi}$.

\medskip

(ii) Let $x_e\in E_{\xi}$. If $\t C(x_e)=0$ then $G\t C(x_e)=0$
and we obtain that $\Pi \t H(x_e)=0$ which implies $x_e\in
E_{\xi_{\Pi}}$.

If $\t C(x_e)\neq 0$ then there exists $\lambda\in \mathbb{R}$
such that $\t H(x_e)=\lambda\t C(x_e)$.  We observe that $\Pi \t
H(x_e)=\lambda\Pi \t C(x_e)=0$ and consequently, $x_e\in
E_{\xi_{\Pi}}$.
\end{proof}

\section{Asymptotic stability}

We will briefly recall some definitions of stability for a
dynamical system on $\R^n$ that will be used later
\begin{equation}\label{egen}
    \dot x = f(x),
\end{equation} where $f \in C^\infty (\R^n, \R^n)$.

\begin{defn}\label{dstab}
An equilibrium point $x_e$ is stable if for any small neighborhood
$U$ of $x_e$ there is a neighborhood $V$ of $x_e$, $V \subset U$
such that if initially $x_0$ is in $V$, then $\phi(t,x_0) \in U$
for all $t>0$. If in addition we have
\begin{equation*}
    \lim_{t \to \infty} \phi(t,x_0) = x_e
\end{equation*}
then $x_e$ is called asymptotically stable.
\end{defn}

For studying more complicated asymptotic behavior we need to
introduce the notion of $\omega$-limit set. Let $\phi^t$ be the
flow defined by equation (\ref{egen}). The $\omega$-limit set of
$x$ is $\omega(x):=\{y\in \R^n|\exists t_1, t_2...\rightarrow
\infty $ {s.t.} $ \phi(t_k, x)\rightarrow y $ {as} $
k\rightarrow\infty\}$. The $\omega$-limit sets have the following
properties that we will use later. For more details, see
\cite{robinson}.

\begin{itemize}
  \item [(i)] If $\phi(t,x)=y$ for some $t\in \R$, then
  $\omega(x)=\omega(y)$.
  \item [(ii)] $\omega(x)$ is a closed subset and both positively and negatively
  invariant (contains complete orbits).
\end{itemize}

The above properties of $\omega$-limit sets have been used in the
proof of the following version of LaSalle theorem, see \cite{2}.

\begin{thm} \label{LaSalle}Let $x_e$ be an equilibrium point of a dynamical system
$$\dot{x}=f(x)$$
and $U$ a small compact neighborhood of $x_e$. Suppose there
exists $L:U\rightarrow \mathbb{R}$ a $C^1$ differentiable function
with $L(x)>0$ for all $x\in U\setminus\{x_e\}$, $L(x_e)=0$ and
$\dot{L}(x)\leq 0$. Let $E$ be the set of all points in $U$ where
$\dot{L}(x)= 0$. Let $M$ be the largest invariant set in $E$. Then
there exists a small neighborhood $V$ of $x_e$ with $V\subset U$
such that $\omega (x)\subset M$ for all $x\in V$.
\end{thm}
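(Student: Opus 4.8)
The plan is to adapt the classical LaSalle invariance argument to this localized, compact setting. The first order of business is to produce a neighborhood $V$ whose forward orbits never leave the compact set $U$; without this, the $\omega$-limit sets could escape $U$ and the hypotheses on $L$ would say nothing there. To this end I would shrink $U$ to a closed ball $\bar{B}(x_e,r)\subset U$, set $\ell:=\min_{\|x-x_e\|=r}L(x)$, which is strictly positive since $L>0$ on $U\setminus\{x_e\}$ and the sphere is compact and misses $x_e$, and then define $V:=\{x\in B(x_e,r)\,:\,L(x)<\ell\}$. Because $L(x_e)=0$ and $L$ is continuous, $V$ is an open neighborhood of $x_e$ contained in $U$.

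The next step is to verify that $V$ is positively invariant. Fix $x_0\in V$. Since $\dot L\leq 0$, the map $t\mapsto L(\phi(t,x_0))$ is non-increasing, so $L(\phi(t,x_0))\leq L(x_0)<\ell$ as long as the solution stays in $\bar{B}(x_e,r)$. The orbit cannot reach the bounding sphere $\|x-x_e\|=r$, on which $L\geq \ell$; hence it remains in the compact set $\{x\in\bar{B}(x_e,r)\,:\,L(x)\leq L(x_0)\}$, which lies inside $V\subset U$. Boundedness of the forward orbit yields global existence for $t\geq 0$, and consequently $\omega(x_0)$ is nonempty, compact, invariant (by the recalled properties of $\omega$-limit sets), and contained in $U$.

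The heart of the matter is the LaSalle step, namely to show $\omega(x_0)\subset E$. Since $L(\phi(t,x_0))$ is non-increasing and bounded below by $0$, it converges to some $c\geq 0$ as $t\to\infty$. For any $y\in\omega(x_0)$ choose $t_k\to\infty$ with $\phi(t_k,x_0)\to y$; continuity of $L$ then forces $L(y)=\lim_k L(\phi(t_k,x_0))=c$, so $L\equiv c$ on $\omega(x_0)$. Now I invoke invariance: for $y\in\omega(x_0)$ the entire orbit $\phi(\cdot,y)$ stays in $\omega(x_0)$, where $L$ is the constant $c$, whence $\frac{d}{dt}L(\phi(t,y))\equiv 0$ and in particular $\dot L(y)=0$. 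Thus $\omega(x_0)\subset E$, and being an invariant subset of $E$ it is contained in the largest such set $M$. As $x_0\in V$ was arbitrary, $\omega(x)\subset M$ for all $x\in V$, which is the assertion.

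I expect the only genuine obstacle to be the sublevel-set construction of the first two paragraphs: one must choose $V$ so that forward trajectories are trapped in $U$, since both the invariance of $M$ and the $\omega$-limit calculus only deliver the conclusion once the orbit is known to remain in the compact region where $\dot L\leq 0$ holds. The remaining steps are the standard continuity-and-invariance bookkeeping.
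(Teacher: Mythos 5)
Your proof is correct: the sublevel-set trapping argument produces a positively invariant compact neighborhood inside $U$, and the standard LaSalle step ($L$ constant on $\omega(x_0)$, hence $\dot L=0$ there by invariance) gives $\omega(x_0)\subset M$. The paper itself does not prove this theorem --- it defers to reference [2] --- but it recalls exactly the two $\omega$-limit set properties (closedness and invariance under the complete flow) as the ingredients of that proof, and your argument is the standard one built on precisely those properties, so it matches the intended route.
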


The next result describes the asymptotic behavior for the
solutions of the dissipated system $\xi$. We introduce the
following set $C_*=\{x\in \mathbb{R}^n\,|\,\t C(x)=0\}$.

\begin{thm}\label{asy} Let $x_e\in E_{\xi}$ be an equilibrium
point for the dissipated system $\xi$. Suppose there exists a
function $\psi(H,C)\in C^{\infty}(\mathbb{R}^2,\mathbb{R})$ such
that $\frac{\partial \psi}{\partial C}(H(x_e),C(x_e))>0$ and $x_e$
is a strict relative minimum for
$L(x)=\psi(H(x),C(x))-\psi(H(x_e),C(x_e))$. Then there exists a
small compact neighborhood $U$ of $x_e$ and another neighborhood
$V$ of $x_e$ with $V\subset U$ such that every solution of $\xi$
starting in $V$ approaches the largest invariant set $M$ in
$U\bigcap (E_{\xi}\bigcup C_*)$ as $t\rightarrow \infty$ ($M$ is
an attracting set).
\end{thm}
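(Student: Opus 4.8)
The plan is to apply the LaSalle-type Theorem \ref{LaSalle} with the candidate Lyapunov function $L(x)=\psi(H(x),C(x))-\psi(H(x_e),C(x_e))$. By hypothesis $L$ is smooth, hence $C^1$; moreover $L(x_e)=0$, and since $x_e$ is a strict relative minimum of $L$ we may fix a compact neighborhood $U$ of $x_e$ so small that $L(x)>0$ for every $x\in U\setminus\{x_e\}$. What remains is to verify the sign condition on $\dot L$ and to identify the set where $\dot L$ vanishes.

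First I would compute the derivative of $L$ along the flow of $\xi$. By the chain rule
$$\dot L=\frac{\partial \psi}{\partial H}\dot H+\frac{\partial \psi}{\partial C}\dot C,$$
and Lemma \ref{2} gives $\dot H=0$ and $\dot C=\t C\cdot G\t C$, so that $\dot L=\frac{\partial \psi}{\partial C}(H(x),C(x))\,(\t C\cdot G\t C)$. Since $\frac{\partial \psi}{\partial C}(H(x_e),C(x_e))>0$ and $\psi,H,C$ are continuous, after shrinking $U$ if necessary we may assume $\frac{\partial \psi}{\partial C}(H(x),C(x))>0$ on all of $U$. Combined with $\t C\cdot G\t C\leq 0$ from Lemma \ref{1}(ii), this yields $\dot L\leq 0$ on $U$, so all hypotheses of Theorem \ref{LaSalle} are met.

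The key step is then to identify the set $E=\{x\in U\,|\,\dot L(x)=0\}$ with $U\cap(E_{\xi}\cup C_*)$; once this is done, the largest invariant set produced by LaSalle is exactly the set $M$ in the statement. Because $\frac{\partial \psi}{\partial C}>0$ on $U$, we have $\dot L(x)=0$ iff $\t C(x)\cdot G\t C(x)=0$, which by the equality case of Lemma \ref{1}(ii) holds iff $\t H(x)$ and $\t C(x)$ are linearly dependent. For the inclusion $E\subset U\cap(E_\xi\cup C_*)$ I would split according to whether $\t C(x)=0$ (giving $x\in C_*$) or $\t C(x)\neq 0$, in which case linear dependence together with Proposition \ref{dependent1}(i) forces $x\in E_\xi$. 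For the reverse inclusion, a point of $C_*$ trivially makes $\t H,\t C$ dependent; while for $x\in E_\xi$ one multiplies $\Pi\t H(x)+G\t C(x)=0$ by $\t C(x)$ and uses $\t C\cdot\Pi\t H=-\t H\cdot\Pi\t C=0$ (as $C$ is Casimir) to obtain $\t C\cdot G\t C=0$, hence dependence again by Lemma \ref{1}(ii). This gives the set equality $E=U\cap(E_\xi\cup C_*)$.

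Finally, Theorem \ref{LaSalle} furnishes a neighborhood $V\subset U$ of $x_e$ such that $\omega(x)\subset M$ for all $x\in V$, where $M$ is the largest invariant set in $E=U\cap(E_\xi\cup C_*)$; this is precisely the assertion that every solution starting in $V$ approaches $M$ as $t\to\infty$, so $M$ is an attracting set. I expect the only delicate point to be the simultaneous shrinking of $U$: it must be chosen small enough to guarantee both the strict positivity of $L$ off $x_e$ and the positivity of $\partial\psi/\partial C$, while remaining compact so that LaSalle applies. The set-identification step is conceptually the heart of the argument, but it reduces entirely to Lemma \ref{1} and Proposition \ref{dependent1}.
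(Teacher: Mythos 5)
Your proposal is correct and follows essentially the same route as the paper: apply Theorem \ref{LaSalle} to $L$ on a compact neighborhood where both $L>0$ off $x_e$ and $\frac{\partial\psi}{\partial C}>0$, compute $\dot L=\frac{\partial\psi}{\partial C}\,\t C\cdot G\t C\leq 0$, and identify $E$ with $U\cap(E_\xi\cup C_*)$ via the equality case of Lemma \ref{1}(ii). The only differences are cosmetic — you invoke Lemma \ref{2} instead of redoing the chain-rule expansion, and you spell out the set-identification step (via Proposition \ref{dependent1}(i)) that the paper states in one line.
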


\begin{proof} We have $L(x_e)=0$ with $x_e$ being a strict
local minimum for $L$. Then there exists a small compact
neighborhood $U$ of $x_e$ such that $L(x)>0$ and $\frac{\partial
\psi}{\partial C}(H(x),C(x))>0$ on this neighborhood. We have the
following computation

\begin{eqnarray*}
\dot{L}(x)&=&\frac{\partial \psi}{\partial H}\t H\cdot
\dot{x}+\frac{\partial \psi}{\partial C}\t C\cdot
\dot{x}=(\frac{\partial \psi}{\partial H}\t H+\frac{\partial
\psi}{\partial C}\t C)(\Pi\t H+G\t C)\\&=&\frac{\partial
\psi}{\partial H}\t H \cdot \Pi\t H+\frac{\partial \psi}{\partial
H}\t H \cdot G\t C+\frac{\partial \psi}{\partial C}\t C \cdot
\Pi\t H+\frac{\partial \psi}{\partial C}\t C \cdot G\t
C\\&=&\frac{\partial \psi}{\partial H}\t H \cdot \Pi\t
H+\frac{\partial \psi}{\partial H}G\t H \cdot \t C-\frac{\partial
\psi}{\partial C}\Pi\t C \cdot \t H+\frac{\partial \psi}{\partial
C}\t C \cdot G\t C
\end{eqnarray*}

Using Lemma \ref{1} (i), antisymmetry of $\Pi$ and the fact that
$C$ is a Casimir function for $\Pi$ we obtain that
$$\dot{L}(x)=\frac{\partial \psi}{\partial C}\t C \cdot G\t C$$
From the hypothesis $\frac{\partial \psi}{\partial C}>0$ on $U$
and by Lemma \ref{1} (ii) we have
$$\dot{L}(x)\leq 0.$$
Using again Lemma \ref{1} (ii) we obtain that $E$ from the Theorem
\ref{LaSalle} equals $E_{\xi}\cup C_*$ which give us the result.
\end{proof}

\begin{rem} Observe that $L$ is also a Lyapunov function for the
unperturbed system $\xi_{\Pi}$. Consequently, by adding the
dissipation we render the stable points of $\xi_{\Pi}$ into
asymptotically stable points for $\xi$. Also the Casimir function
$C$ can be considered as a Lyapunov function.
\end{rem}

\begin{corollary}\label{cor}Let $x_e\in E_{\xi}$ and a function $\psi(H,C)\in
C^{\infty}(\mathbb{R}^2,\mathbb{R})$ such that $\frac{\partial
\psi}{\partial C}(H(x_e),C(x_e))>0$. Suppose that the function
$L(x)=\psi(H(x),C(x))-\psi(H(x_e),C(x_e))$ has the properties:

i) $\delta L(x_e)=0$;

ii) $\delta^2 L(x_e)$ \emph{is positive definite.}

Then there exists a small compact neighborhood $U$ of $x_e$ and
another neighborhood $V$ of $x_e$ with $V\subset U$ such that
every solution of $\xi$ starting in $V$ approaches the largest
invariant set $M$ in $U\bigcap (E_{\xi}\bigcup C_*)$ as
$t\rightarrow \infty$. Moreover $H$ remains constant along these
solutions and $C$ decreases along these solutions.
\end{corollary}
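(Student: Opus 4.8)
The plan is to reduce this statement to Theorem \ref{asy} by showing that hypotheses (i) and (ii) are nothing more than a computationally convenient restatement of the requirement, already imposed in Theorem \ref{asy}, that $x_e$ be a strict relative minimum for $L$. Once this reduction is carried out the asymptotic conclusion is immediate, and the final ``moreover'' assertion follows directly from Lemma \ref{2}.

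First I would record that $L(x_e)=0$ by the very definition $L(x)=\psi(H(x),C(x))-\psi(H(x_e),C(x_e))$, so this part of the hypothesis of Theorem \ref{asy} holds automatically. Next, since $H$, $C$ and $\psi$ are smooth, $L$ is of class $C^{\infty}$ near $x_e$, and I would invoke Taylor's formula to second order,
$$L(x)=\delta L(x_e)(x-x_e)+\tfrac{1}{2}\,\delta^2L(x_e)(x-x_e,x-x_e)+o(\|x-x_e\|^2).$$
Hypothesis (i) kills the linear term, while hypothesis (ii) guarantees that the quadratic form $\delta^2L(x_e)$ is bounded below by $c\|x-x_e\|^2$ for some $c>0$; as the remainder is of strictly smaller order, one gets $L(x)>0$ for all $x$ close to but distinct from $x_e$. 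This is precisely the classical second-order sufficient condition for a strict local minimum, and it shows that $x_e$ is a strict relative minimum for $L$.

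With $L(x_e)=0$, with $x_e$ a strict relative minimum for $L$, and with $\frac{\partial\psi}{\partial C}(H(x_e),C(x_e))>0$ all in hand, every hypothesis of Theorem \ref{asy} is satisfied. Applying that theorem produces the compact neighborhood $U$ and the subneighborhood $V\subset U$, together with the conclusion that every solution of $\xi$ starting in $V$ approaches the largest invariant set $M$ in $U\cap(E_{\xi}\cup C_*)$ as $t\rightarrow\infty$. For the ``moreover'' part I would simply appeal to Lemma \ref{2}: part (i) gives that $H$ is conserved along the solutions of $\xi$, hence along any solution issuing from $V$, and part (ii) gives that $C$ decreases along those same solutions.

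I do not expect any genuine obstacle here, since the corollary is a clean specialization of Theorem \ref{asy}. The only point requiring a little care is the Taylor-expansion step: one must ensure that the neighborhood on which $L>0$ can be chosen inside the neighborhood on which $\frac{\partial\psi}{\partial C}>0$, so that the compact set $U$ supplied by Theorem \ref{asy} meets both requirements simultaneously. This is harmless because both conditions hold on an open set containing $x_e$, so their intersection is again an open neighborhood of $x_e$ in which a suitable compact $U$ can be found.
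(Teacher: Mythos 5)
Your proposal is correct and follows the same route as the paper: the paper's own proof simply observes that hypotheses (i) and (ii) make $x_e$ a strict relative minimum of $L$ and then applies Theorem \ref{asy}, which is exactly your reduction (you merely spell out the second-order Taylor argument and the appeal to Lemma \ref{2} for the ``moreover'' clause, both of which the paper leaves implicit).
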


\begin{proof} It is easy to observe that the point $x_e$ is a strict
relative minimum of $L$. All the condition of the Theorem
\ref{asy} are satisfied and we obtain the desired result.
\end{proof}

\begin{rem} If we consider, in the Corolarry \ref{cor}, the function
$\psi(H,C)=H+C$ we obtain the stability result of \cite{2}.
\end{rem}

\begin{corollary}\label{corolar} In the hypotheses of the Theorem \ref{asy} we consider a point
$x_0\in V$. Suppose that the set $H^{-1}(\{h\})\bigcap U\bigcap
(E_{\xi}\bigcup C_*)$ has a unique point $x_h$ ($h=H(x_0)$). The
solution $x(t,x_0)$ of the system $\xi$ which verifies the initial
condition $x(0,x_0)=x_0$ satisfies the property
$$\lim_{t\rightarrow \infty}x(t,x_0)=x_h.$$
\end{corollary}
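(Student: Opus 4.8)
The plan is to identify the $\omega$-limit set $\omega(x_0)$ of the forward orbit through $x_0$ exactly, showing it reduces to the single point $x_h$, and then to invoke the standard fact that a precompact orbit whose $\omega$-limit set is a singleton must converge to that point. In this way the corollary is reduced to pinning down $\omega(x_0)$, and the two ingredients that accomplish this are Theorem \ref{asy} together with the conservation of $H$ from Lemma \ref{2}(i).

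First I would record that the solution starting at $x_0\in V$ remains in the compact neighborhood $U$ for all $t>0$. This is built into the LaSalle-type construction underlying Theorem \ref{asy}: since $L$ is a non-increasing Lyapunov function ($\dot L\leq 0$ on $U$) and $V$ may be taken to be a sublevel set of $L$ contained in $U$, the level sets of $L$ are forward invariant and the orbit cannot leave $U$. Consequently the forward orbit is precompact, so $\omega(x_0)$ is nonempty and compact, and by Theorem \ref{asy} it is contained in the largest invariant set $M\subset U\cap(E_{\xi}\cup C_*)$.

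Next I would exploit the conservation of the Hamiltonian. By Lemma \ref{2}(i) we have $H(x(t,x_0))=H(x_0)=h$ for all $t$. If $y\in\omega(x_0)$, then $y=\lim_{k}x(t_k,x_0)$ for some $t_k\to\infty$, so by continuity $H(y)=h$; hence $\omega(x_0)\subset H^{-1}(\{h\})$. Combining this with the inclusion from the previous paragraph yields
$$\omega(x_0)\subset H^{-1}(\{h\})\cap U\cap(E_{\xi}\cup C_*)=\{x_h\},$$
where the last equality is precisely the uniqueness hypothesis. Since $\omega(x_0)$ is nonempty, this forces $\omega(x_0)=\{x_h\}$.

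Finally, to pass from $\omega(x_0)=\{x_h\}$ to $\lim_{t\to\infty}x(t,x_0)=x_h$, I would argue by contradiction: were convergence to fail, there would be $\varepsilon>0$ and $t_k\to\infty$ with $x(t_k,x_0)$ outside the $\varepsilon$-ball about $x_h$, and by compactness of $U$ a subsequence would converge to some $y\in\omega(x_0)$ with $y\neq x_h$, contradicting $\omega(x_0)=\{x_h\}$. The conceptual heart of the argument is the use of energy conservation to trap $\omega(x_0)$ inside a single level set $H^{-1}(\{h\})$, which is what makes the uniqueness assumption applicable; the main technical obstacle is the final passage from a singleton $\omega$-limit set to genuine convergence, which is legitimate only because the orbit is confined to the compact set $U$. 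This is why keeping the solution inside $U$ in the first step is essential rather than cosmetic.
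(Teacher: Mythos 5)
Your proof is correct; the paper actually prints no proof for this corollary, and your argument is exactly the intended one: Theorem \ref{asy} places $\omega(x_0)$ inside $U\cap(E_{\xi}\cup C_*)$, conservation of $H$ (Lemma \ref{2}(i)) places it inside $H^{-1}(\{h\})$, and the uniqueness hypothesis together with the standard fact that a nonempty singleton $\omega$-limit set of a precompact orbit forces convergence gives the conclusion. Your care in noting that the orbit must remain in the compact set $U$ (via the sublevel-set choice of $V$ in the LaSalle construction) is a legitimate and necessary point that the paper leaves implicit.
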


We can interpret the above result as follows, using the Lyapunov
function of Theorem \ref{asy} we obtain that every equilibrium
point in the neighborhood $V$ is asymptotically stable for the
dynamical system on the corresponding level set.

\section{Applications to the rigid body dynamics}

The motion of a rigid body can be reduced to the translation of
center of mass and rotation about it. Rotation is conveniently
described, in a coordinate system with the origin at the center of
mass and the axes along principal central axes of inertia, by
Euler's equations. This equations can be written in the following
form
$$\left\{%
\begin{array}{ll}
    \dot{x_1}=(\frac{1}{I_3}-\frac{1}{I_2})x_2x_3+u_1 \\
    \dot{x_2}=(\frac{1}{I_1}-\frac{1}{I_3})x_1x_3+u_2\\
    \dot{x_3}=(\frac{1}{I_2}-\frac{1}{I_1})x_1x_2+u_3\\
\end{array}%
\right.$$ where
$x_1=I_1\omega_1,\,x_2=I_2\omega_2,\,x_3=I_3\omega_3$ are the
components of $\bf{x}$, $I_1,I_2,I_3$ are the principal moments of
inertia, $\omega_1,\omega_2,\omega_3$ are the components of the
angular velocity and $u_1,u_2,u_3$ are the components of applied
torques $\bf{u}$. In this paper we suppose that $I_1>I_2>I_3$.

\hspace{0.5cm} The system of free rotations, denoted by
$\xi_{\Pi}$,  has the property that $\bf{u}=\bf{0}$. It has the
following Hamilton-Poisson realization
$$((so(3))^*\approx \mathbb{R}^3,\, \{\cdot\,,\cdot\}_{-},\,H)$$
where $\{\cdot\,,\cdot\}_{-}$ is minus-Lie-Poisson structure on
$(so(3))^*\approx \mathbb{R}^3$ generated by the matrix
$$\Pi_{-}=\left(%
\begin{array}{ccc}
  0 & -x_3 & x_2 \\
  x_3 & 0 & -x_1 \\
  -x_2 & x_1 & 0 \\
\end{array}%
\right)$$ and the Hamiltonian $H$ is given by
$$H(x_1,x_2,x_3)=\frac{1}{2}(\frac{x_1^2}{I_1}+\frac{x_2^2}{I_2}+\frac{x_3^2}{I_3}).$$
It is easy to see that the function $C_0\in
C^{\infty}(\mathbb{R}^3,\mathbb{R})$ given by
$$C_0(x_1,x_2,x_3)=\frac{1}{2}(x_1^2+x_2^2+x_3^2)$$ is a Casimir of our configuration
$((so(3))^*\approx \mathbb{R}^3,\,
\{\cdot\,,\cdot\}_{-})$, called the \emph{standard Casimir},  i.e.
$$\{C_0,f\}=0$$
for each $f\in C^{\infty}(\mathbb{R}^3,\mathbb{R})$ where
$$\{f,g\}=(\t f)^T\Pi_{-}\t g.$$
The set of the Casimir functions is given by
$$\{C=\varphi (C_0)\,|\, \varphi\in C^{\infty}(\mathbb{R},\mathbb{R})\}.$$
It is known that the set of equilibria is given by
$$E_{\xi_{\Pi}}=\{(M_0,0,0)\,|\,M_0\in \mathbb{R}\}\bigcup\{(0,M_0,0)\,|\,M_0\in
\mathbb{R}\}\bigcup \{(0,0,M_0)\,|\,M_0\in \mathbb{R}\}$$
and the equilibrium points of the form $(M_0,0,0)$ or $(0,0,M_0)$
are stable and the equilibrium points of the form $(0,M_0,0)$ with
$\,M_0\neq 0$ are not stable.

In this paper we consider a torque of the form
$$\textbf{u}=G\t C,$$
with $C$ a Casimir function and $G$ the matrix
$$G=\triangledown H\otimes\triangledown H-||\triangledown H||^2\mathbb{I}=\left(%
\begin{array}{ccc}
  -\frac{x_2^2}{I_2^2}-\frac{x_3^2}{I_3^2} & \frac{x_1 x_2}{I_1 I_2} & \frac{x_1 x_3}{I_1 I_3} \\
  \frac{x_1 x_2}{I_1 I_2} & -\frac{x_1^2}{I_1^2}-\frac{x_3^2}{I_3^2} & \frac{x_2 x_3}{I_2 I_3} \\
  \frac{x_1 x_3}{I_1 I_3} & \frac{x_2 x_3}{I_2 I_3} & -\frac{x_1^2}{I_1^2}-\frac{x_2^2}{I_2^2} \\
\end{array}%
\right).$$

For the case of the free rigid body the above matrix is the one
also used in \cite{7}.

\bigskip

{\bf Case I.} We first consider the standard Casimir $C_0$ in
order to construct the perturbation. We obtain a torque
$\textbf{u}_0$ with the components:
$$\left\{%
\begin{array}{ll}
   u^0_1=x_1[(\frac{1}{I_1}-\frac{1}{I_2})\frac{x_2^2}{I_2}+(\frac{1}{I_1}-\frac{1}{I_3})\frac{x_3^2}{I_3}] \\
   u^0_2=x_2[(\frac{1}{I_2}-\frac{1}{I_1})\frac{x_1^2}{I_1}+(\frac{1}{I_2}-\frac{1}{I_3})\frac{x_3^2}{I_3}]\\
   u^0_3=x_3[(\frac{1}{I_3}-\frac{1}{I_1})\frac{x_1^2}{I_1}+(\frac{1}{I_3}-\frac{1}{I_2})\frac{x_2^2}{I_2}]\\
\end{array}%
\right.$$

Let $\xi^0$ be the rotation system with the torque $\textbf{u}_0$.
It is easy to see that the set of the  equilibrium points of the
dissipated system is $$E_{\xi^0}=E_{\xi_{\Pi}}.$$ Also we have
$$C_*=\{(0,0,0)\}.$$

We consider $M_0\in \mathbb{R}^*$. The point $x_e=(0,0,M_0)$ is an
equilibrium point of the rotation system $\xi^0$. We define the
function $\psi :\mathbb{R}^2\rightarrow \mathbb{R}$ by
$$\psi (H,C_0)=(C_0-\frac{M_0^2}{2})^2+C_0-I_3H.$$
This function has the properties:
$$\frac{\partial \psi}{\partial
C_0}(H,C_0)=2(C_0-\frac{M_0^2}{2})+1$$ and
$$\frac{\partial \psi}{\partial C_0}(H(x_e),C_0(x_e))=1>0.$$
Using our notations we introduce the Lyapunov function
\begin{eqnarray*}
L(x_1,x_2,x_3)&=&(C_0(x_1,x_2,x_3)-\frac{M_0^2}{2})^2+C_0(x_1,x_2,x_3)-I_3H(x_1,x_2,x_3)
\\&=&(\frac{1}{2}(x_1^2+x_2^2+x_3^2)-\frac{M_0^2}{2})^2+\frac{1}{2}
(x_1^2+x_2^2+x_3^2)-\frac{I_3}{2}(\frac{x_1^2}{I_1}+\frac{x_2^2}{I_2}+\frac{x_3^2}{I_3}).
\end{eqnarray*}

It is easy to see that
$$\delta L(x_e)=0$$ and
$$\delta^2 L(x_e)=\left(%
\begin{array}{ccc}
  1-\frac{I_3}{I_1} & 0 & 0 \\
  0 & 1-\frac{I_3}{I_2} & 0 \\
  0 & 0 & 2M_0^2 \\
\end{array}%
\right) .$$ Using our hypotheses we observe that $\delta^2 L(x_e)$
is positive definite. Consequently, the hypotheses of the
Corollary \ref{cor} are satisfied and we have the following
stability result.

\begin{thm}There exists a small compact neighborhood $U$ of $x_e$ and an
other $V\subset U$ such that every solution of $\xi^0$ starting in
$V$ approaches $U\cap \{(0,0,x)\,|\,x\in \mathbb{R}\}$ as
$t\rightarrow \infty $. Moreover $H$ remains constant along
solutions and $C_0$ decreases along this solutions.\end{thm}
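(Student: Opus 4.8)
The plan is to recognize this theorem as a direct application of Corollary~\ref{cor} to the specific data $(\xi^0, x_e, \psi)$ assembled just above, followed by an explicit identification of the invariant set $M$ that the corollary produces. First I would record that $x_e=(0,0,M_0)$ with $M_0\neq 0$ lies in $E_{\xi^0}$, which is immediate from $E_{\xi^0}=E_{\xi_{\Pi}}$ and the stated form of the equilibrium set. Next I would cite the three computations already performed: $\frac{\partial\psi}{\partial C_0}(H(x_e),C_0(x_e))=1>0$, $\delta L(x_e)=0$, and the diagonal form of $\delta^2 L(x_e)$ with entries $1-\frac{I_3}{I_1}$, $1-\frac{I_3}{I_2}$ and $2M_0^2$. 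Since $I_1>I_2>I_3>0$ the first two entries are strictly positive, and $M_0\neq 0$ makes the third positive, so $\delta^2 L(x_e)$ is positive definite and all hypotheses of Corollary~\ref{cor} hold.

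Applying Corollary~\ref{cor} would then furnish compact neighborhoods $V\subset U$ of $x_e$ so that every solution of $\xi^0$ starting in $V$ approaches, as $t\to\infty$, the largest invariant set $M$ contained in $U\cap(E_{\xi^0}\cup C_*)$. The only genuine work left is to identify $M$ concretely. Here I would use $C_*=\{(0,0,0)\}$ together with the explicit description of $E_{\xi^0}=E_{\xi_{\Pi}}$ as the union of the three coordinate axes. Because $M_0\neq 0$, shrinking $U$ I can arrange that it avoids the origin and the $x_1$- and $x_2$-axes (which meet the $x_3$-axis only at $0$); hence $U\cap(E_{\xi^0}\cup C_*)=U\cap\{(0,0,x)\,|\,x\in\mathbb{R}\}$.

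Finally I would note that this remaining set consists entirely of equilibria of $\xi^0$, so it is pointwise fixed and therefore invariant; the largest invariant set inside it is thus the set itself, $M=U\cap\{(0,0,x)\,|\,x\in\mathbb{R}\}$, which is precisely the asserted conclusion. The ``Moreover'' clause is inherited directly from Lemma~\ref{2}: part (i) gives that $H$ is conserved along the solutions of $\xi$, and hence of $\xi^0$, while part (ii) gives that $C_0$ decreases. The only obstacle worth flagging is the bookkeeping in this last identification, where one must verify that a sufficiently small $U$ excludes the two off-axis equilibrium branches and the origin, so that $M$ collapses to the segment of the $x_3$-axis rather than picking up stray equilibria from the other components of $E_{\xi^0}$.
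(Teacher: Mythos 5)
Your proposal is correct and follows essentially the same route as the paper, which presents this theorem as an immediate consequence of Corollary \ref{cor} once the hypotheses ($\frac{\partial\psi}{\partial C_0}(H(x_e),C_0(x_e))=1>0$, $\delta L(x_e)=0$, $\delta^2 L(x_e)$ positive definite) have been verified. Your explicit identification of the largest invariant set $M$ with $U\cap\{(0,0,x)\,|\,x\in\mathbb{R}\}$, by shrinking $U$ so that it misses the plane $x_3=0$ and hence the other two equilibrium axes and the origin, is a detail the paper leaves implicit but is exactly the intended argument.
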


If $U$ is sufficiently small then $U\cap E_{\xi^0}\subset
\{(0,0,x)\,|\,sgn(x)=constant\}$. Suppose that $sgn(x)=1$. Using
the Corollary \ref{corolar} we have the following asymptotic
stability result.

\begin{thm}If $x_0\in V$ then the solution $x(t,x_0)$ of the system
$\xi_0$ which verifies the initial condition $x(0,x_0)=x_0$
satisfies the property
$$\lim_{t\rightarrow \infty}x(t,x_0)=(0,0,\sqrt{2I_3H(x_0)}).$$
\end{thm}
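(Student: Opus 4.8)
The plan is to deduce this statement directly from Corollary \ref{corolar}, whose uniqueness hypothesis is tailor-made to upgrade the qualitative convergence of the preceding theorem into an explicit limit point. The preceding theorem already tells us that every solution starting in $V$ approaches the largest invariant set $M$ contained in $U\cap(E_{\xi^0}\cup C_*)$, and that $H$ is conserved along the flow. So the only remaining task is to show that the level set $H^{-1}(\{h\})$, with $h=H(x_0)$, meets this attracting set in exactly one point, and to compute that point.

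First I would describe $U\cap(E_{\xi^0}\cup C_*)$ for $U$ small enough. Recall from Case I that $E_{\xi^0}=E_{\xi_{\Pi}}$ is the union of the three coordinate axes and that $C_*=\{(0,0,0)\}$. Since the equilibrium $x_e=(0,0,M_0)$ has $M_0\neq 0$, shrinking $U$ excludes the origin and the two axes through it, leaving $U\cap(E_{\xi^0}\cup C_*)\subset\{(0,0,x)\,|\,sgn(x)=1\}$, in accordance with the standing assumption $sgn(x)=1$ on $U$. Thus the attracting set lies entirely on the positive $x_3$-semiaxis.

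Next I would use conservation of $H$, established in Lemma \ref{2} (i), to single out the limit. On the positive semiaxis we have $H(0,0,x)=\frac{x^2}{2I_3}$, which is strictly increasing in $x>0$; hence the equation $H(0,0,x)=h$ has the unique solution $x=\sqrt{2I_3 h}$. This shows that $H^{-1}(\{h\})\cap U\cap(E_{\xi^0}\cup C_*)$ consists of the single point $x_h=(0,0,\sqrt{2I_3 H(x_0)})$, so the hypotheses of Corollary \ref{corolar} hold and it yields $\lim_{t\to\infty}x(t,x_0)=x_h$, which is the assertion.

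The only delicate point — and the main obstacle — is verifying the uniqueness condition of Corollary \ref{corolar}: one must ensure that for a genuinely small $U$ the competing equilibrium axes and the origin are removed, so that $H^{-1}(\{h\})$ cannot intersect the attracting set in any spurious point. Once $U$ is taken small enough around $x_e$ this is automatic, and the conservation of $H$ then converts the convergence of the previous theorem into the exact limit $(0,0,\sqrt{2I_3 H(x_0)})$.
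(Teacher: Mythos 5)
Your proposal is correct and follows essentially the same route as the paper: shrink $U$ so that $U\cap(E_{\xi^0}\cup C_*)$ lies on the positive $x_3$-semiaxis, use conservation of $H$ together with the strict monotonicity of $H(0,0,x)=\frac{x^2}{2I_3}$ for $x>0$ to identify the unique point $x_h=(0,0,\sqrt{2I_3H(x_0)})$ in $H^{-1}(\{h\})\cap U\cap(E_{\xi^0}\cup C_*)$, and then invoke Corollary \ref{corolar}. The paper leaves these verifications implicit, so your write-up is in fact slightly more detailed than the original, but the underlying argument is identical.
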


We present the simulation of the rotation of the rigid body in the
case when the principal moments of inertia are $I_1=4$, $I_2=1.5$,
$I_3=1$ and the  initial conditions are $(-0.1,\,0.2,\,0.175)$.
\begin{figure}[h]
\begin{center}

  \includegraphics[width=7cm]{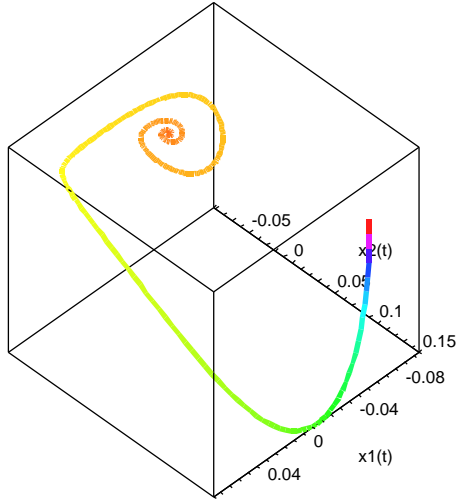}\\
  \caption{The rotation of the rigid body}\label{*}
\end{center}
\end{figure}

\begin{rem} We see that the function $C_1=C_0+(C_0-\frac{M_0^2}{2})^2$
is also a Casimir function. If we consider
$\psi_1(H,C_1)=C_1-I_3H$ we have $\frac{\partial \psi_1}{\partial
C_1}=1>0$ and $\psi (H,C_0)=\psi_1(H,C_1)$. It is possible to
apply Corollary \ref{cor} for the system $\xi_1$ of the form
(\ref{sistem}) with the Casimir function $C_1$ and the equilibrium
point $x_e=(0,0,M_0)$.
\end{rem}

\bigskip

{\bf Case II.} We study the case of the Casimir $C_2=-C_0$.We
obtain a torque $\textbf{u}_2$ with the components:
$$\left\{%
\begin{array}{ll}
   u^2_1=-x_1[(\frac{1}{I_1}-\frac{1}{I_2})\frac{x_2^2}{I_2}+(\frac{1}{I_1}-\frac{1}{I_3})\frac{x_3^2}{I_3}] \\
   u^2_2=-x_2[(\frac{1}{I_2}-\frac{1}{I_1})\frac{x_1^2}{I_1}+(\frac{1}{I_2}-\frac{1}{I_3})\frac{x_3^2}{I_3}]\\
   u^2_3=-x_3[(\frac{1}{I_3}-\frac{1}{I_1})\frac{x_1^2}{I_1}+(\frac{1}{I_3}-\frac{1}{I_2})\frac{x_2^2}{I_2}]\\
\end{array}%
\right.$$

Let $\xi^2$ the rotation system with the torque $\textbf{u}_2$. It
is easy to see that the set of the  equilibrium points of the
dissipated system is $$E_{\xi^2}=E_{\xi_{\Pi}}.$$ Also we have
$$C_{2*}=\{(0,0,0)\}.$$
We consider the equilibrium point $x_e=(M_0,0,0)$ with $M_0\in
\mathbb{R}^*$.

If we define
$$\psi_2 (H,C_2):=H+(C_2+\frac{M_0^2}{2})^2+\frac{C_2}{I_1},$$
we have that
$$\frac{\partial \psi_2}{\partial
C_2}(H,C_2)=2(C_2+\frac{M_0^2}{2})+\frac{1}{I_1}$$ and
$$\frac{\partial \psi_2}{\partial C_2}(H(x_e),C_2(x_e))=\frac{1}{I_1}>0.$$
In this situation we can apply Corollary \ref{cor} and we obtain
the stability result.

\begin{thm}There exists a small compact neighborhood $U$ of $x_e$ and an
other $V\subset U$ such that every solution of $\xi^2$ starting in
$V$ approaches $U\cap \{(x,0,0)\,|\,x\in \mathbb{R}\}$ as
$t\rightarrow \infty $. Moreover $H$ remains constant along
solutions and $C_2=-C_0$ decreases along this solutions.\end{thm}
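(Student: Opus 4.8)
The plan is to verify that all the hypotheses of Corollary \ref{cor} hold for the equilibrium $x_e=(M_0,0,0)$, $M_0\in\mathbb{R}^*$, together with the function $\psi_2$, and then to read off the conclusion exactly as in Case I. First I would record that $x_e\in E_{\xi^2}$: since $E_{\xi^2}=E_{\xi_{\Pi}}$ and $(M_0,0,0)$ lies on the first coordinate axis, it is indeed an equilibrium. The positivity condition $\frac{\partial \psi_2}{\partial C_2}(H(x_e),C_2(x_e))=\frac{1}{I_1}>0$ has already been checked above, so what remains is to analyze the Lyapunov candidate
$$L(x)=\psi_2(H(x),C_2(x))-\psi_2(H(x_e),C_2(x_e)).$$

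Next I would compute the first two differentials of $L$ at $x_e$. Writing $C_2=-\frac{1}{2}(x_1^2+x_2^2+x_3^2)$ and using the key identity $C_2(x_e)+\frac{M_0^2}{2}=0$, a direct computation gives $\delta L(x_e)=0$, so that $x_e$ is a critical point of $L$. For the Hessian, the term $C_2/I_1$ contributes $\delta^2[C_2/I_1]=-\frac{1}{I_1}\mathbb{I}$, the quadratic term contributes $\delta^2[(C_2+\frac{M_0^2}{2})^2](x_e)=2\,\delta C_2(x_e)\otimes\delta C_2(x_e)$ (the piece proportional to $C_2(x_e)+\frac{M_0^2}{2}$ vanishing), and $\delta^2 H=\mathrm{diag}(\frac{1}{I_1},\frac{1}{I_2},\frac{1}{I_3})$. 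Adding these and using $\delta C_2(x_e)=-(M_0,0,0)$ I expect
$$\delta^2 L(x_e)=\left(\begin{array}{ccc} 2M_0^2 & 0 & 0 \\ 0 & \frac{1}{I_2}-\frac{1}{I_1} & 0 \\ 0 & 0 & \frac{1}{I_3}-\frac{1}{I_1} \end{array}\right).$$

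Then I would argue positive definiteness: the entry $2M_0^2>0$ because $M_0\neq 0$, while the other two diagonal entries are positive precisely because $I_1$ is the largest moment of inertia, so that $I_1>I_2$ and $I_1>I_3$ yield $\frac{1}{I_2}-\frac{1}{I_1}>0$ and $\frac{1}{I_3}-\frac{1}{I_1}>0$. Hence $\delta^2 L(x_e)$ is positive definite and $x_e$ is a strict local minimum of $L$. With $\delta L(x_e)=0$, $\delta^2 L(x_e)$ positive definite, and $\frac{\partial\psi_2}{\partial C_2}(H(x_e),C_2(x_e))>0$ in hand, Corollary \ref{cor} applies and produces neighborhoods $V\subset U$ such that every solution of $\xi^2$ starting in $V$ approaches the largest invariant set $M$ in $U\cap(E_{\xi^2}\cup C_{2*})$, with $H$ constant and $C_2$ decreasing along these solutions (the last two facts coming from Lemma \ref{2}).

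Finally I would identify the target set. Since $E_{\xi^2}=E_{\xi_{\Pi}}$ and $C_{2*}=\{(0,0,0)\}\subset E_{\xi^2}$, we have $E_{\xi^2}\cup C_{2*}=E_{\xi^2}$; shrinking $U$ about $x_e=(M_0,0,0)$ so that it meets only the first coordinate axis gives $U\cap(E_{\xi^2}\cup C_{2*})\subset\{(x,0,0)\,|\,x\in\mathbb{R}\}$, whence $M\subset U\cap\{(x,0,0)\,|\,x\in\mathbb{R}\}$ and the stated convergence follows. I do not anticipate any genuine obstacle: the argument is the Case I template transported to the $x_1$-axis, and the only point requiring care is the sign of the two nonleading diagonal Hessian entries, which is exactly where the hypothesis that $I_1$ is the largest moment of inertia (rotation about the long axis) is used — had one instead chosen the middle axis $(0,M_0,0)$, the Hessian would be indefinite, in accordance with the known instability there.
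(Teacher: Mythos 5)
Your proposal is correct and follows the paper's own route exactly: the paper proves this theorem by checking $\frac{\partial \psi_2}{\partial C_2}(H(x_e),C_2(x_e))=\frac{1}{I_1}>0$ and invoking Corollary \ref{cor}, which is precisely your argument. You merely supply details the paper leaves implicit for Case II (the explicit computation $\delta L(x_e)=0$, the Hessian $\mathrm{diag}(2M_0^2,\tfrac{1}{I_2}-\tfrac{1}{I_1},\tfrac{1}{I_3}-\tfrac{1}{I_1})$ whose positivity rests on $I_1>I_2>I_3$, and the identification of $U\cap(E_{\xi^2}\cup C_{2*})$ with a piece of the $x_1$-axis), all of which check out.
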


If $U$ is sufficiently small then $U\cap E_{\xi^2}\subset
\{(x,0,0)\,|\,sgn(x)=constant\}$. Suppose that $sgn(x)=1$. Using
the Corollary \ref{corolar} we obtain the asymptotic stability
result.

\begin{thm}If $x_0\in V$ then the solution $x(t,x_0)$ of the system $\xi_2$
which verifies the initial condition $x(0,x_0)=x_0$ satisfies the
property
$$\lim_{t\rightarrow \infty}x(t,x_0)=(\sqrt{2I_1H(x_0)},0,0).$$
\end{thm}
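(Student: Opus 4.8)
The plan is to deduce this statement directly from Corollary~\ref{corolar}, whose hypotheses coincide with those of Theorem~\ref{asy} already verified for the present configuration (Casimir $C_2=-C_0$, equilibrium $x_e=(M_0,0,0)$ with $M_0>0$). The only thing that remains is to check that the uniqueness condition in Corollary~\ref{corolar} holds, namely that the set $H^{-1}(\{h\})\cap U\cap(E_{\xi^2}\cup C_{2*})$ reduces to a single point for $h=H(x_0)$, and then to identify that point explicitly.

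First I would locate the relevant equilibria. We have already recorded $E_{\xi^2}=E_{\xi_{\Pi}}$ and $C_{2*}=\{(0,0,0)\}$. Since $U$ is a small compact neighborhood of $x_e=(M_0,0,0)$ with $M_0\neq 0$, by shrinking $U$ we may assume the origin is not contained in $U$, so that $C_{2*}$ contributes nothing and $U\cap(E_{\xi^2}\cup C_{2*})$ is contained in the branch $\{(x,0,0)\,|\,x>0\}$, the sign of $x$ being fixed to $+1$ as assumed just before the statement.

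Next I would restrict $H$ to this branch. On $\{(x,0,0)\,|\,x>0\}$ one has $H(x,0,0)=\dfrac{x^2}{2I_1}$, which is a strictly increasing function of $x>0$ and hence injective. Consequently the level set $H^{-1}(\{h\})$ meets this branch in at most one point, so the uniqueness hypothesis of Corollary~\ref{corolar} is satisfied. Solving $\dfrac{x^2}{2I_1}=H(x_0)$ for $x>0$ produces the unique intersection point $x_h=(\sqrt{2I_1H(x_0)},0,0)$.

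Finally, Corollary~\ref{corolar} yields $\lim_{t\to\infty}x(t,x_0)=x_h=(\sqrt{2I_1H(x_0)},0,0)$, as claimed. The only genuine points requiring care are the exclusion of the origin from $U$ and the verification that $H$ is strictly monotone along the equilibrium branch; once these are in place the conclusion is immediate, so I expect no substantial obstacle beyond this bookkeeping.
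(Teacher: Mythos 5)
Your proposal is correct and follows essentially the same route as the paper: the paper likewise obtains this theorem by applying Corollary~\ref{corolar} after noting that $U\cap E_{\xi^2}$ lies in a single branch $\{(x,0,0)\,|\,\mathrm{sgn}(x)=1\}$, and your verification of the uniqueness hypothesis (excluding the origin so $C_{2*}$ is irrelevant, then using injectivity of $H(x,0,0)=x^2/(2I_1)$ on the positive branch) just makes explicit the bookkeeping the paper leaves implicit.
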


{\bf Case III.} In \cite{2} were introduced the functions
$$C_3=(C_0-\frac{M_0^2}{2})^2-\frac{C_0}{I_1}=(C_2+\frac{M_0^2}{2})^2+\frac{C_2}{I_1}$$
and
$$L_3(x)=H(x)+C_3(x)-H(x_e)-C_3(x_e).$$
It is easy to see that:

i) $\delta L_3(x_e)=0$;

ii) $\delta^2 L_3(x_e)$ is positive definite because the Hessian matrix of $L_3$ in $x_e$ is $$\left(%
\begin{array}{ccc}
  8M_0^2 & 0 & 0 \\
  0 & \frac{1}{I_2}-\frac{1}{I_1} & 0 \\
  0 & 0 & \frac{1}{I_3}-\frac{1}{I_1} \\
\end{array}%
\right).$$

We denote by $\xi_3$ the system of the form (\ref{sistem}) with
the Casimir $C_3$. In \cite{2} has been proved the following
result.

\begin{thm}There exists a small compact neighborhood $U$ of $x_e$
and an other $V\subset U$ such that every solution of $\xi^3$
starting in $V$ approaches $U\cap \{(x,0,0)\,|\,x\in \mathbb{R}\}$
as $t\rightarrow \infty $.\end{thm}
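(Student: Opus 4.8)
The plan is to recognize this as one more direct application of Corollary \ref{cor}, exactly in the spirit of Cases I and II, with the naive choice $\psi_3(H,C_3)=H+C_3$. With this choice one has $L_3(x)=\psi_3(H(x),C_3(x))-\psi_3(H(x_e),C_3(x_e))$ and $\frac{\partial \psi_3}{\partial C_3}\equiv 1>0$, so in particular $\frac{\partial \psi_3}{\partial C_3}(H(x_e),C_3(x_e))=1>0$. The two facts already recorded above, namely $\delta L_3(x_e)=0$ and the positive definiteness of the displayed Hessian $\delta^2 L_3(x_e)$ (whose diagonal entries $8M_0^2$, $\frac{1}{I_2}-\frac{1}{I_1}$, $\frac{1}{I_3}-\frac{1}{I_1}$ are all strictly positive since $I_1>I_2>I_3$ and $M_0\neq 0$), are precisely hypotheses (i) and (ii) of Corollary \ref{cor}. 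Hence $x_e$ is a strict relative minimum of $L_3$ and every hypothesis of that corollary is met.

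Invoking Corollary \ref{cor} then produces a compact neighborhood $U$ of $x_e$ and a neighborhood $V\subset U$ such that every solution of $\xi^3$ starting in $V$ approaches the largest invariant set $M$ inside $U\cap(E_{\xi^3}\cup C_{3*})$, where $C_{3*}=\{x\,|\,\t C_3(x)=0\}$. It then remains only to identify this set locally near $x_e$. As in the previous cases, the equilibrium sets coincide, $E_{\xi^3}=E_{\xi_\Pi}$ (by Proposition \ref{dependent1} together with the fact that $\t H$ and $\t C_3$ are collinear along each coordinate axis), and near $x_e=(M_0,0,0)$ with $M_0\neq 0$ the only branch of $E_{\xi_\Pi}$ meeting a small neighborhood is $\{(x,0,0)\,|\,x\in\mathbb{R}\}$.

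The step that differs from Cases I and II, and the one deserving care, is the analysis of $C_{3*}$. Writing $C_3=\varphi(C_0)$ with $\varphi(s)=(s-\frac{M_0^2}{2})^2-\frac{s}{I_1}$, we get $\t C_3=\varphi'(C_0)\,\t C_0$, so that $C_{3*}=\{(0,0,0)\}\cup\{x\,|\,C_0(x)=\frac{M_0^2}{2}+\frac{1}{2I_1}\}$, a point together with a sphere, which is strictly larger than the $C_*=\{(0,0,0)\}$ encountered before. The key observation is that $x_e$ lies on neither component: $\t C_0(x_e)=(M_0,0,0)\neq 0$ and $\varphi'(C_0(x_e))=\varphi'(\frac{M_0^2}{2})=-\frac{1}{I_1}\neq 0$, hence $\t C_3(x_e)\neq 0$ and $x_e\notin C_{3*}$. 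Since $C_{3*}$ is closed and avoids $x_e$, shrinking $U$ forces $U\cap C_{3*}=\emptyset$. Consequently $U\cap(E_{\xi^3}\cup C_{3*})=U\cap E_{\xi_\Pi}=U\cap\{(x,0,0)\,|\,x\in\mathbb{R}\}$, so the limit set $M$ is contained in this slice, which is exactly the assertion.

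I expect the main obstacle to be not the dynamics but precisely this bookkeeping: unlike in Cases I and II, the vanishing locus of $\t C_3$ carries an extra spherical component, and the whole conclusion hinges on that sphere staying away from $x_e$. This in turn reduces to the single inequality $\varphi'(C_0(x_e))\neq 0$, after which the reduction $U\cap(E_{\xi^3}\cup C_{3*})=U\cap\{(x,0,0)\}$ goes through and the theorem follows from Corollary \ref{cor}.
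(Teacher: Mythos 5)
Your proposal is correct and follows essentially the route the paper intends: the paper defers the proof to reference [2], but its Remark after Corollary \ref{cor} makes clear that this theorem is exactly Corollary \ref{cor} applied with $\psi(H,C)=H+C$, using the already-verified facts $\delta L_3(x_e)=0$ and positive definiteness of $\delta^2 L_3(x_e)$. Your additional check that the spherical component of $C_{3*}=\{\t C_3=0\}$ misses $x_e$ (via $\varphi'(C_0(x_e))=-\tfrac{1}{I_1}\neq 0$), so that a small $U$ gives $U\cap(E_{\xi^3}\cup C_{3*})=U\cap\{(x,0,0)\}$, is a correct and worthwhile detail that the paper leaves implicit.
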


Our results applies to equilibria that are already stable. By adding
the dissipation we can make them asymptotically stable if certain
conditions are satisfied. Going from asymptotic stability on the
level set to stability in the whole space have been studied in
\cite{aese}. By adding other type of controls one can stabilize the
unstable equilibria. This subject is studied for example in
\cite{aey}, \cite{bkms}, \cite{blma}, \cite{sosu}.

\end{document}